\newcolumntype{C}[1]{>{\centering\arraybackslash}m{#1}}
\newtheorem{theorem}{Theorem}{}
\newtheorem{lemma}{Lemma}{}
{}
{}
{}
{}
{}
\newcommand{\rk}{\operatorname{rank}}
\theoremstyle{definition}
\providecommand{\keywords}[1]{%
  \vspace{2mm}
  {\small{\em Keywords:} #1}
}
\title{\bf Multiplicity Bounds for Arbitrary Eigenvalues of Connected Signed Graphs}
\author[1,2]{Monther R. Alfuraidan}
\author[2,*]{Suliman Khan}
\affil[1]{{\small Department of Mathematics, King Fahd University of Petroleum \& Minerals, Dhahran 31261, Saudi Arabia}}
\affil[2]{{\small Interdisciplinary Center of Smart Mobility and Logistics, King Fahd University of Petroleum \& Minerals, Dhahran 31261, Saudi Arabia\\
\texttt{monther@kfupm.edu.sa, suliman5344@gmail.com}}}
\date{}
\begin{document}

\maketitle

\vspace{-0.1cm}

\begin{abstract}
The study of eigenvalue multiplicities plays a central role in the spectral theory of signed graphs, extending several classical results from the unsigned setting. While most existing work focuses on the nullity of a signed graph (the multiplicity of the eigenvalue $0$), much less is known for arbitrary eigenvalues. In this paper we establish a sharp upper bound for the multiplicity $m(G_\sigma,\lambda)$ of any real eigenvalue $\lambda$ of a connected signed graph $G_\sigma$ in terms of its girth.

Our main result shows that
\[
m(G_\sigma,\lambda) \le n - g(G_\sigma) + 2,
\]
where $n$ is the number of vertices and $g(G_\sigma)$ is the girth. We prove that equality holds if and only if $G_\sigma$ is switching equivalent to one of the following extremal families:
(i) a balanced complete graph and $\lambda=-1$;
(ii) an antibalanced complete graph and $\lambda=1$; or
(iii) a balanced complete bipartite graph and $\lambda=0$.
This completely extends and generalizes the known result for the nullity case ($\lambda=0$), originally due to Wu et al.~(2022), to the full eigenvalue spectrum.

Our approach combines Cauchy interlacing, switching equivalence, and a structural analysis of induced cycles in signed graphs. We also provide a detailed characterization of eigenvalues with multiplicity $1$ and $2$ for signed cycles. Several examples are included to illustrate the sharpness and the spectral behavior of the extremal families.
\end{abstract}
\keywords{Eigenvalue multiplicity; signed graph; girth;
switching equivalence; spectral graph theory.}

\medskip
\noindent\textbf{Mathematics Subject Classification (2020).}
05C50, 05C22, 05C20, 05C12, 15A18.
\section{Introduction}
\indent

Throughout this paper, we assume that all graphs are simple and undirected, containing neither loops nor multiple edges. Let $G=(V,E)$ be a simple graph of order $|V|=n$. We write $v_i \sim v_j$ to indicate that the vertices $v_i$ and $v_j$ in $G$ are adjacent. The neighborhood of a vertex $v_i$ in $G$ is defined as $N_G(v_i)=\{\,v_j\in V(G): v_j\sim v_i \}$. A graph $H$ is called a \emph{subgraph} of $G$ if $V(H)\subseteq V(G)$ and $E(H)\subseteq E(G)$. Furthermore, $H$ is an \emph{induced subgraph} of $G$ if for each $v \in V(H)$, we have $N_H(v) = N_G(v) \cap V(H)$. Equivalently, $v_i$ and $v_j$ are adjacent in $H$ if and only if they are adjacent in $G$. The \emph{adjacency matrix} $A(G)=[a_{ij}]$ of $G$ is defined by $a_{ij}=1$ whenever $v_i \sim v_j$ and $0$ otherwise. Since $A(G)$ is real symmetric, all eigenvalues are real and can be ordered as $\lambda_1 \ge \lambda_2 \ge \dots \ge \lambda_n$. The \emph{spectrum} of $G$, denoted $\mathrm{Spec}_A(G)$, is the multiset of eigenvalues of $A(G)$. The \emph{rank} of $G$, denoted $\operatorname{rk}(G)$, is the rank of $A(G)$. For an eigenvalue $\lambda$ of $A(G)$, its \emph{multiplicity} is denoted by $m(G,\lambda)$.

A \emph{signed graph} is a natural extension of an ordinary graph in which each edge carries a positive or negative sign. Formally, a signed graph $G^\sigma = (G,\sigma)$ consists of an underlying graph $G=(V,E)$ together with a sign function $\sigma : E \to \{+1,-1\}$, where edges with $\sigma(e)=+1$ are positive and those with $\sigma(e)=-1$ are negative. When all edges are positive (resp.\ negative), we write $(G,+)$ (resp.\ $(G,-)$). Signed graphs capture systems where interactions may be cooperative or antagonistic, and they naturally incorporate notions such as balance, switching equivalence, and parity of cycles.

\medskip
\noindent{\bf Background and classical results.}
The multiplicities of eigenvalues of Hermitian and real symmetric matrices associated with graphs have been studied extensively. Parter--Wiener theory, developed in \cite{Parter1960,Wiener1984} and later extended in \cite{Johnson2003,Johnson2006}, provides important tools for understanding how graph structure influences eigenvalue multiplicity. Star complement techniques have been fruitfully applied to obtain upper bounds on eigenvalue multiplicities \cite{Bell2003}. Multiplicities of adjacency eigenvalues also carry structural meaning. For example, the nullity (multiplicity of $0$) plays a central role in chemical graph theory \cite{Atkins2006}. Among connected graphs of order $n\ge 2$, the eigenvalue $0$ attains its maximum multiplicity $n-2$ in complete bipartite graphs $K_{n_1,n_2}$, while the eigenvalue $-1$ attains multiplicity $n-1$ in the complete graph $K_n$. Motivated by these extremal cases, several works---including \cite{Petrovic1991,Yang2019,Johnson2009,Rowlinson2013,Rowlinson2014,Bang2017}---have investigated graphs with large multiplicities of specific eigenvalues.

\medskip
\noindent{\bf Structural parameters and eigenvalue multiplicities.}
A parallel line of research relates eigenvalue multiplicities to structural parameters such as girth, matching number, cyclomatic number, diameter, and pendant vertices. Cámara and Haemers \cite{Camara2014} showed that multiplicity patterns characterize almost complete graphs. Cioabă et al.\ \cite{Cioab2015} investigated sparse graphs with all but two eigenvalues equal to $\pm 1$. Wang \cite{Wang2022m} obtained multiplicity bounds in terms of diameter. Ma et al.\ \cite{Ma2016} and Tam and Huang \cite{Tam2017} linked nullity to pendant vertices and the cycle space, while Wang et al.\ \cite{Wang2020a} extended these ideas to arbitrary eigenvalues. In \cite{Wang2020}, the multiplicity of $-1$ was related to pendant vertices in trees. Further results related to girth, extremal multiplicity, and eigenvalue structure appear in \cite{Zhou2021,Rowlinson2011,Chang2020,Cvetkovic1995,Horn1985,Wang2022,Chang2022a,Lu2021a,Fan2013,Fan2014,Wong2022,Chang2022}.

\medskip
\noindent{\bf Signed graphs and spectral multiplicities.}
Since signed graphs generalize ordinary graphs, it is natural to examine how switching equivalence, balance, and signed cycle structure affect eigenvalue multiplicities. Recent studies such as \cite{Belardo2019,Ramezani2020,Khan2025,Ciampella2025} have developed spectral characterizations for signed graphs, including results on nullity, balance, and extremal configurations. Extending multiplicity bounds from unsigned graphs to signed graphs introduces additional challenges: cycle signs interact with eigenvectors, switching can alter the adjacency matrix without changing the spectrum, and balance imposes structural constraints not present in the unsigned setting.

\medskip
\noindent{\bf Motivation and gap in the literature.}
While the nullity of signed graphs (the case $\lambda=0$) has been thoroughly studied, far less is known about the multiplicity of an \emph{arbitrary} eigenvalue $\lambda$. A major recent advance is due to Wu et al.\ \cite{Wu2022}, who proved a sharp bound for $m(G^\sigma,0)$ in terms of girth and characterized all cases of equality. However, their result applies only to the eigenvalue $0$. Extending the same bound to all eigenvalues is nontrivial: general eigenvalues lack direct combinatorial interpretations, interact differently with signed cycles, and require a deeper analysis of switching classes and induced substructures.

\medskip
\noindent{\bf Contributions of this paper.}
In this work, we establish a sharp upper bound on the multiplicity $m(G^\sigma,\lambda)$ of \emph{any} real eigenvalue $\lambda$ of a connected signed graph $G^\sigma$ of order $n$ and girth $g(G^\sigma)$:
\[
m(G^\sigma,\lambda) \leq n - g(G^\sigma) + 2.
\]
We completely determine all signed graphs achieving equality. Equality holds if and only if $G^\sigma$ is:
\begin{itemize}
    \item a balanced complete signed graph $K_n^\sigma$ with $\lambda = -1$,
    \item an antibalanced complete signed graph $K_n^\sigma$ with $\lambda = 1$, or
    \item a balanced complete bipartite signed graph $K_{n_1,n_2}^\sigma$ (with $n_1,n_2\ge 2$) with $\lambda = 0$.
\end{itemize}
We also fully characterize signed cycles satisfying $m(G^\sigma,\lambda)=n-g(G^\sigma)+q$ for $q\in\{1,2\}$. Our results extend and generalize those of Wu et al.\ \cite{Wu2022}, whose theorem applies only to $\lambda=0$.

For convenience, we restate their result here:

\begin{theorem}\label{Theorem1} \cite{Wu2022}
Let $G^\sigma$ be a connected signed graph of order $n$ and girth $g(G^\sigma)$. Then
$m(G^\sigma,0) \leq n-g(G^\sigma)+2$, with equality if and only if $G^\sigma$ is one of the following:
(i) a balanced complete bipartite signed graph with $g(G^\sigma) = 4$;
(ii) a positive (resp., negative) signed cycle $(C_g,\sigma)$ with $g(G^\sigma)\equiv 0 \pmod{4}$ (resp., $g(G^\sigma)\equiv 2 \pmod{4}$).
\end{theorem}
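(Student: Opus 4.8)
\medskip
\noindent\textbf{Proof strategy.}\ The plan is to combine Cauchy interlacing with a rank count, and then to pin down the equality cases using the explicit null eigenvectors of signed cycles together with the classification of connected signed graphs of rank $2$. Since switching leaves both the spectrum and the sign of every cycle unchanged, I may fix a shortest cycle $C=C_g$ of $G^\sigma$; being a shortest cycle it has no chord, hence is induced, so the $V(C)\times V(C)$ block of $A(G^\sigma)$ is exactly $A(C^\sigma)$ and $A(C^\sigma)$ is (after relabelling) a principal submatrix of $A(G^\sigma)$.

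For the inequality: deleting the $n-g$ vertices of $W:=V(G^\sigma)\setminus V(C)$ one at a time and applying Cauchy interlacing at each step gives $m(G^\sigma,0)\le m(C^\sigma,0)+(n-g)$, so it suffices to show $m(C^\sigma,0)\le 2$. This is immediate from the spectra of signed cycles: the adjacency eigenvalues of a positive $g$-cycle are $2\cos(2\pi k/g)$ and those of a negative one $2\cos((2k+1)\pi/g)$ for $0\le k<g$; at most two of these vanish, and $0$ occurs with multiplicity exactly $2$ precisely when $C^\sigma$ is a positive cycle with $g\equiv 0\pmod 4$ or a negative cycle with $g\equiv 2\pmod 4$. (If $G^\sigma$ is acyclic the statement is read with $g=\infty$ and is vacuous.) Hence $m(G^\sigma,0)\le n-g+2$.

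For the equality characterization: the interlacing chain is tight only if $m(C^\sigma,0)=2$ (forcing $g$ even, $g\ge 4$, with the parity/sign condition above) and every one of the $n-g$ deletions drops $m(\cdot,0)$ by exactly one. Since $m(\cdot,0)=n-\operatorname{rk}$ and the rank of a principal submatrix cannot exceed that of the whole matrix, the latter is equivalent to $\operatorname{rk}(G^\sigma)=\operatorname{rk}(C^\sigma)=g-2$. If $n=g$ we are in extremal family (ii). If $n>g$: the $g$ rows of $A(G^\sigma)$ indexed by $V(C)$ already have rank $g-2$, hence span the whole row space, so each row $A_w$ with $w\in W$ equals some combination $\sum_{v\in V(C)}\alpha_v A_v$; restricting this identity to the columns indexed by $V(C)$ gives $A(C^\sigma)\alpha = (A_w|_{V(C)})^{\top}$. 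When $g\ge 5$ no $w\in W$ can have two neighbours on $C$ (that would create a cycle of length at most $g/2+2<g$), so $A_w|_{V(C)}$ is either $0$ or $\pm e_{v(w)}$ for the unique neighbour $v(w)\in V(C)$. But the two $0$-eigenvectors of $C^\sigma$ have complementary ``alternating'' supports, so at every vertex of $C$ one of them is nonzero; thus $e_{v(w)}\notin\operatorname{Im}A(C^\sigma)=(\ker A(C^\sigma))^{\perp}$ and the equation $A(C^\sigma)\alpha=\pm e_{v(w)}$ has no solution. Hence every $w\in W$ is nonadjacent to $C$, contradicting connectedness of $G^\sigma$; so $g\ge 5$ is impossible and $g=4$.

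When $g=4$ the rank condition reads $\operatorname{rk}(G^\sigma)=2$. Writing the spectral decomposition $A(G^\sigma)=\mu\bigl(z_1z_1^{\top}-z_2z_2^{\top}\bigr)$ (the two nonzero eigenvalues are opposite since $\operatorname{tr}A=0$), using connectedness to see $z_1$ has no zero entry, and using that $A$ has zero diagonal to get $|z_1(i)|=|z_2(i)|$ for all $i$, one finds that $G^\sigma$ is complete bipartite with colour classes recorded by the signs $z_2(i)/z_1(i)=\pm1$, and that a suitable switching turns all edges positive; since a signed complete bipartite graph is balanced iff it is antibalanced, $G^\sigma$ is a balanced complete bipartite graph, with both parts of size $\ge 2$ because it contains a $4$-cycle. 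Conversely one checks directly that the three listed families attain the bound (balanced $K_{n_1,n_2}$ has nullity $n_1+n_2-2$; the two cycle families have nullity $2$). I expect the rigidity step for $g\ge 5$ to be the main obstacle---precisely, controlling $\ker A(C^\sigma)$ and playing the girth constraint against the rank identity to exclude any attachment to the shortest cycle; the rank-$2$ analysis in the remaining case $g=4$ is comparatively routine.
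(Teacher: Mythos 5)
The paper does not prove Theorem~\ref{Theorem1} at all: it is quoted verbatim from Wu, Lu and Tam \cite{Wu2022} purely as background, so there is no in-paper argument to compare yours against. Judged on its own, your proposal is a correct and essentially complete self-contained proof, and its skeleton (interlacing against an induced shortest cycle for the bound, then a rank-rigidity analysis for equality) is the natural one for this statement. The two places where the real work happens are both handled soundly: (a) for $g\ge 5$ you use that the rows indexed by $V(C)$ span the row space of $A(G^\sigma)$, that a vertex outside a shortest cycle of length at least $5$ has at most one neighbour on it, and that $e_v\notin\operatorname{Im}A(C^\sigma)$ because the two null vectors of $C^\sigma$ have complementary alternating supports --- this last fact deserves one more line for the unbalanced case, but it follows immediately from the two-term recurrence $x_{j+1}=-\sigma_{j,j+1}\sigma_{j,j-1}x_{j-1}$ satisfied by any null vector, which shows $|x_{j+2}|=|x_j|$ and hence that the kernel (being $2$-dimensional) contains vectors supported exactly on the even and exactly on the odd positions; (b) for $g=4$ the rank-$2$ spectral decomposition $A=\mu(z_1z_1^{\top}-z_2z_2^{\top})$ with zero diagonal and no zero entries of $z_1$ correctly forces the complete bipartite structure and the balanced signing. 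Two cosmetic points: the theorem lists two extremal families, not three (your closing sentence miscounts), and you may want to state explicitly that $m(C_g^\sigma,0)=2$ already forces the parity/sign conditions of family~(ii), so that the $n=g$ branch of your case analysis lands exactly on the claimed congruences.
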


\medskip
\noindent{\bf Structure of the paper.}
Section~2 provides essential definitions and preliminaries.
Section~3 establishes the main multiplicity bound and the equality characterization.
Section~4 fully characterizes signed cycles with $m(G^\sigma,\lambda)=n-g(G^\sigma)+q$ for $q=1,2$.

\section{Preliminaries}

In this section we recall fundamental concepts and results from spectral graph theory and the theory of signed graphs that will be used throughout the paper. We begin with basic notions of signed cycles, balance, and switching equivalence.

\medskip
\noindent{\bf Signed cycles and balance.}
Let $C^\sigma$ be a signed cycle. Its \emph{signature} (or sign) is defined as the product of the signs of its edges:
\[
\operatorname{sgn}(C^\sigma)=\prod_{e\in E(C^\sigma)}\sigma(e).
\]
A cycle is called \emph{balanced} if $\operatorname{sgn}(C^\sigma)=+1$ and \emph{unbalanced} otherwise.
A signed graph $G^\sigma$ is \emph{balanced} if every cycle in $G^\sigma$ is balanced; otherwise it is \emph{unbalanced}.
A signed graph is \emph{antibalanced} if every even cycle is positive and every odd cycle is negative.

\medskip
\noindent{\bf Switching equivalence.}
Given a signed graph $G^\sigma=(G,\sigma)$ and a \emph{switching function} $\zeta:V\to\{\pm1\}$, the signed graph $G^{\sigma^\zeta}=(G,\sigma^\zeta)$ is obtained by switching at the vertices according to
\[
\sigma^\zeta(uv)=\zeta(u)\,\sigma(uv)\,\zeta(v).
\]
Switching preserves the sign of every cycle, and two signed graphs $G^{\sigma_1}$ and $G^{\sigma_2}$ on the same underlying graph are \emph{switching equivalent}, written $G^{\sigma_1}\sim G^{\sigma_2}$, if one can be obtained from the other via switching at a set of vertices.
Switching equivalence plays a role analogous to isomorphism for simple graphs. An illustration of switching equivalent graphs is shown in Figure~\ref{Figeq}.

\begin{figure}[htbp]
    \centering
    \includegraphics[width=0.6\textwidth]{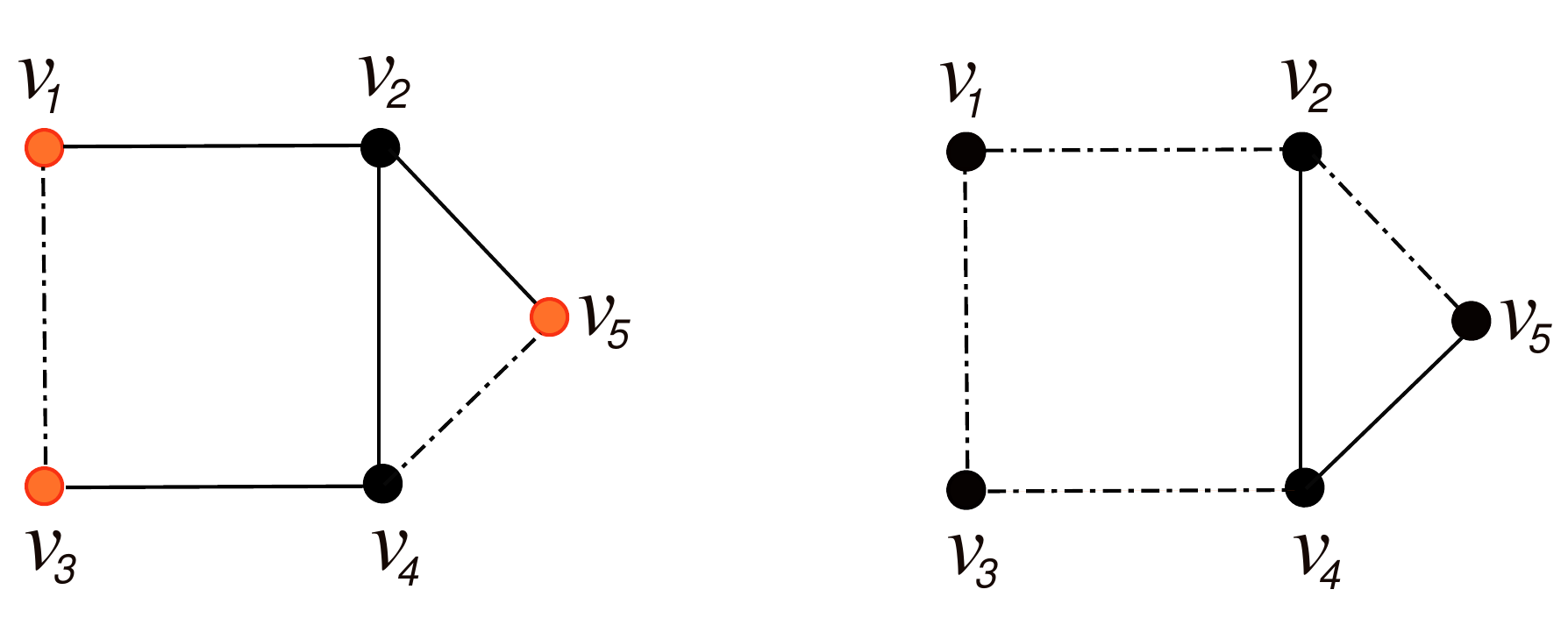}
    \caption{An example of switching equivalent graphs switched on $U = \{v_1, v_3, v_5\}$.}
    \label{Figeq}
\end{figure}

\medskip
\noindent{\bf Adjacency matrix of a signed graph.}
For a signed graph $G^\sigma$ of order $n$, the adjacency matrix $A(G^\sigma)=[a^{\sigma}_{ij}]$ is defined by
\[
a^{\sigma}_{ij} =
\begin{cases}
\sigma(v_i v_j), & \text{if } v_i\sim v_j,\\[4pt]
0, & \text{otherwise}.
\end{cases}
\]
Thus the sign pattern of $G^\sigma$ is encoded directly in the entries of $A(G^\sigma)$.
Since $A(G^\sigma)$ is real and symmetric, all eigenvalues are real and may be ordered as
\[
\lambda_1 \ge \lambda_2 \ge \cdots \ge \lambda_n.
\]
The adjacency spectrum of $G^\sigma$, denoted $\mathrm{Spec}_A(G,\sigma)$, is the multiset of eigenvalues of $A(G^\sigma)$ counted with algebraic multiplicity.
For an eigenvalue $\lambda$, its multiplicity is denoted $m(G^\sigma,\lambda)$.
Because $A(G^\sigma)$ is diagonalizable, the algebraic and geometric multiplicities coincide.
The rank of a signed graph is $\rk(G^\sigma)=\rk(A(G^\sigma))$, and for any eigenvalue $\lambda\in\mathbb{R}$,
\[
m(G^\sigma,\lambda)= n - \rk(\lambda I - A(G^\sigma)).
\]

\medskip
\noindent{\bf Balance via switching.}
A classical result of Harary, restated by Zaslavsky in \cite{Horn1985}, characterizes balance in terms of switching:

\begin{lemma}\label{Lemma1}
Let $G^\sigma=(G,\sigma)$ be a signed graph.
Then $G^\sigma$ is balanced if and only if $(G,\sigma)\sim(G,+)$, where $(G,+)$ is the all-positive signing of $G$.
\end{lemma}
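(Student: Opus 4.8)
The statement to prove is Lemma~\ref{Lemma1}: a signed graph $G^\sigma$ is balanced if and only if $(G,\sigma)\sim(G,+)$.

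\medskip
\noindent\textbf{Proof proposal.}
The plan is to prove both directions, with the forward direction (balanced $\Rightarrow$ switching equivalent to all-positive) being the substantive one. For the easy direction, suppose $(G,\sigma)\sim(G,+)$. Since switching preserves the sign of every cycle (as recorded in the Preliminaries), and every cycle in $(G,+)$ is positive, every cycle in $(G,\sigma)$ must also be positive; hence $G^\sigma$ is balanced.

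\medskip
For the forward direction, assume $G^\sigma$ is balanced. Without loss of generality $G$ is connected (otherwise argue componentwise, defining the switching function independently on each component). Fix a spanning tree $T$ of $G$ and a root vertex $r$. Define a switching function $\zeta:V\to\{\pm1\}$ by setting $\zeta(v)$ equal to the product of the signs $\sigma(e)$ over the edges $e$ on the unique $r$--$v$ path in $T$ (with $\zeta(r)=+1$). The first claim is that after switching by $\zeta$, every tree edge becomes positive: if $uv\in E(T)$ with $v$ the child, then the $r$--$v$ path is the $r$--$u$ path followed by $uv$, so $\zeta(v)=\zeta(u)\sigma(uv)$, whence $\sigma^\zeta(uv)=\zeta(u)\sigma(uv)\zeta(v)=\zeta(u)^2\sigma(uv)^2=+1$. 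The second claim is that every non-tree edge also becomes positive. Let $e=xy\notin E(T)$. Adding $e$ to $T$ creates a unique cycle $C_e$, consisting of $e$ together with the $x$--$y$ path $P$ in $T$. Since switching preserves cycle signs and $G^\sigma$ is balanced, $\operatorname{sgn}(C_e^{\sigma^\zeta})=\operatorname{sgn}(C_e^{\sigma})=+1$. But all edges of $P$ are tree edges, hence positive after switching, so $\operatorname{sgn}(C_e^{\sigma^\zeta})=\sigma^\zeta(xy)$. Therefore $\sigma^\zeta(xy)=+1$. Since every edge of $G$ is either a tree edge or a non-tree edge, $\sigma^\zeta\equiv+1$, i.e.\ $(G,\sigma)\sim(G,+)$.

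\medskip
I expect no serious obstacle here; the argument is classical (Harary's balance theorem). The one point requiring minor care is the reduction to the connected case and the verification that the tree-path switching function is well-defined (which it is, since in a tree the $r$--$v$ path is unique). One could alternatively phrase the forward direction via the observation that balance is equivalent to the existence of a bipartition $V=V_1\cup V_2$ with all positive edges inside the parts and all negative edges between them, and then take $\zeta\equiv+1$ on $V_1$ and $\zeta\equiv-1$ on $V_2$; but the spanning-tree construction is cleaner and self-contained, so that is the route I would write up.
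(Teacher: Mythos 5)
Your proof is correct. The paper does not actually prove Lemma~\ref{Lemma1} at all --- it is stated as a classical result of Harary (restated by Zaslavsky) and simply cited --- so there is no in-paper argument to compare against. Your spanning-tree construction is the standard proof of Harary's balance theorem: the easy direction via preservation of cycle signs under switching, and the forward direction by defining $\zeta(v)$ as the product of edge signs along the tree path from the root, checking tree edges directly and non-tree edges via their fundamental cycles. All steps are sound, including the reduction to the connected case and the well-definedness of $\zeta$.
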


\medskip
\noindent{\bf Interlacing.}
We recall the classical Cauchy interlacing theorem (see \cite{Horn1985}), which will be used repeatedly to bound eigenvalue multiplicities.

\begin{theorem}[Cauchy's Interlacing Theorem]
Let $C$ be a real symmetric $n\times n$ matrix and let $D$ be an $(n-1)\times(n-1)$ principal submatrix of $C$.
If $\lambda_1\ge\cdots\ge\lambda_n$ and $\mu_1\ge\cdots\ge\mu_{n-1}$ are the eigenvalues of $C$ and $D$ respectively, then
\[
\lambda_j \ge \mu_j \ge \lambda_{j+1}, \qquad j=1,\dots,n-1.
\]
\end{theorem}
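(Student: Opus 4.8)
The statement to be proved is Cauchy's Interlacing Theorem; since it is classical, the natural route is to deduce it from the Courant--Fischer min--max characterization of the eigenvalues of a real symmetric matrix. After a permutation similarity (which alters neither the spectrum of $C$ nor the property that $D$ is a principal submatrix), I may assume that $D$ is obtained from $C$ by deleting the last row and column. Identifying $\mathbb{R}^{n-1}$ with the coordinate subspace $W=\{x\in\mathbb{R}^n:\ x_n=0\}$, one has $x^{\top}Cx=x^{\top}Dx$ for every $x\in W$, and the min--max formulas read
\[
\lambda_j=\max_{\substack{S\subseteq\mathbb{R}^n\\ \dim S=j}}\ \min_{0\ne x\in S}\ \frac{x^{\top}Cx}{x^{\top}x},
\qquad
\mu_j=\max_{\substack{S\subseteq W\\ \dim S=j}}\ \min_{0\ne x\in S}\ \frac{x^{\top}Cx}{x^{\top}x}.
\]

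First I would establish the upper bound $\mu_j\le\lambda_j$. Every $j$-dimensional subspace $S$ of $W$ is in particular a $j$-dimensional subspace of $\mathbb{R}^n$, so the maximum defining $\mu_j$ is taken over a subfamily of the subspaces appearing in the maximum defining $\lambda_j$; hence $\mu_j\le\lambda_j$ immediately.

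For the lower bound $\mu_j\ge\lambda_{j+1}$, I would apply the inequality just proved to the matrices $-C$ and $-D$. The eigenvalues of $-C$ in nonincreasing order are $-\lambda_n\ge\cdots\ge-\lambda_1$, so the $k$-th largest eigenvalue of $-C$ is $-\lambda_{n+1-k}$; likewise the $k$-th largest eigenvalue of $-D$ is $-\mu_{n-k}$. Since $-D$ is a principal submatrix of $-C$, the upper bound gives $-\mu_{n-k}\le-\lambda_{n+1-k}$, i.e.\ $\mu_{n-k}\ge\lambda_{n+1-k}$; setting $j=n-k$ yields $\mu_j\ge\lambda_{j+1}$ for $j=1,\dots,n-1$. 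Combining the two estimates produces $\lambda_j\ge\mu_j\ge\lambda_{j+1}$, as required.

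There is no genuine obstacle here, the result being standard; the only points demanding a little care are the harmless reduction to deletion of the last coordinate and the index bookkeeping in the passage to $-C$, where one must track that the $k$-th largest eigenvalue of $-C$ corresponds to the $(n+1-k)$-th largest eigenvalue of $C$. If one wished to bypass the min--max theorem, an alternative is to argue via Sylvester's law of inertia: for each real $t$ the inertias of $C-tI$ and of the contracted matrix $D-tI$ are related so that the number of eigenvalues of $D$ exceeding $t$ differs from the corresponding count for $C$ by at most one, which is equivalent to the stated interlacing inequalities.
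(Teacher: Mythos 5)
Your argument is correct. The paper itself gives no proof of this statement --- it is quoted as a classical result with a citation to Horn and Johnson's \emph{Matrix Analysis} --- so there is nothing to compare against; your Courant--Fischer derivation is the standard textbook proof, the reduction to deleting the last coordinate is harmless, and the index bookkeeping in the passage to $-C$ and $-D$ (the $k$-th largest eigenvalue of $-D$ being $-\mu_{n-k}$, yielding $\mu_j\ge\lambda_{j+1}$ with $j=n-k$) checks out.
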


Since the adjacency matrix of an induced subgraph is a principal submatrix of the adjacency matrix of the full graph, we immediately obtain the interlacing theorem for signed graphs:

\begin{lemma}[Interlacing Theorem for signed graphs]\label{interlacing}
Let $G^\sigma$ be a signed graph of order $n$ with eigenvalues $\lambda_1 \ge \dots \ge \lambda_n$, and let $H^\sigma$ be an induced subgraph of $G^\sigma$ with $m$ vertices and eigenvalues $\mu_1 \ge \dots \ge \mu_m$. Then
\[
\lambda_j \ge \mu_j \ge \lambda_{n-m+j}, \qquad j=1,2,\dots,m.
\]
\end{lemma}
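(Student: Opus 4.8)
The plan is to obtain Lemma~\ref{interlacing} from the Cauchy Interlacing Theorem by a short finite induction on the number of deleted vertices. First I would record the only signed-graph-specific point: if $H^\sigma$ is an induced subgraph of $G^\sigma$, then by definition a vertex pair $v_iv_j$ is an edge of $H^\sigma$ exactly when it is an edge of $G^\sigma$, and in that case it carries the same sign. Hence the entries of $A(H^\sigma)$ coincide with the corresponding entries of $A(G^\sigma)$, so $A(H^\sigma)$ is precisely the principal submatrix of $A(G^\sigma)$ obtained by deleting the $n-m$ rows and columns indexed by $V(G^\sigma)\setminus V(H^\sigma)$.

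Next I would build a telescoping chain. Enumerating the $n-m$ vertices to be removed, form a sequence of real symmetric matrices
\[
A(G^\sigma)=C_0,\ C_1,\ \dots,\ C_{n-m}=A(H^\sigma),
\]
where each $C_{k+1}$ arises from $C_k$ by striking one more row and its corresponding column; thus $C_k$ is $(n-k)\times(n-k)$ and $C_{k+1}$ is an $(n-k-1)\times(n-k-1)$ principal submatrix of $C_k$. Writing $\nu^{(k)}_1\ge\cdots\ge\nu^{(k)}_{n-k}$ for the eigenvalues of $C_k$, we have $\nu^{(0)}_i=\lambda_i$ and $\nu^{(n-m)}_i=\mu_i$. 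I would then prove by induction on $t$ the two-sided estimate
\[
\lambda_j \ \ge\ \nu^{(t)}_j \ \ge\ \lambda_{j+t}, \qquad 1\le j\le n-t .
\]
The base case $t=0$ is trivial. For the inductive step, apply Cauchy's Interlacing Theorem to $C_t \supset C_{t+1}$: for $1\le j\le n-t-1$ one has $\nu^{(t)}_j \ge \nu^{(t+1)}_j \ge \nu^{(t)}_{j+1}$. Combining the left inequality with the inductive upper bound gives $\lambda_j \ge \nu^{(t)}_j \ge \nu^{(t+1)}_j$; combining the right inequality with the inductive lower bound used at index $j+1$ (legitimate since $j\le n-(t+1)$ forces $j+1\le n-t$) gives $\nu^{(t+1)}_j \ge \nu^{(t)}_{j+1} \ge \lambda_{j+1+t}=\lambda_{j+(t+1)}$. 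This is exactly the claimed bound at level $t+1$. Setting $t=n-m$ yields $\lambda_j \ge \mu_j \ge \lambda_{n-m+j}$ for $j=1,\dots,m$.

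I do not expect any genuine obstacle: the statement is the standard multi-step corollary of Cauchy interlacing, and the signed setting contributes nothing beyond the elementary observation that induced subgraphs correspond to principal submatrices. The only point requiring mild care is the index bookkeeping in the induction, namely verifying that one additional deletion shifts the accumulated lower-bound index from $j+t$ to $j+t+1$ on exactly the admissible range of $j$, which the argument above checks explicitly.
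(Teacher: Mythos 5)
Your proof is correct and takes essentially the same route as the paper, which simply notes that the adjacency matrix of an induced signed subgraph is a principal submatrix of $A(G^\sigma)$ and then invokes Cauchy interlacing. You merely make explicit the iterated application of the one-row-deletion case (with careful index bookkeeping) that the paper leaves implicit.
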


This lemma implies that the eigenvalues of any induced subgraph ``interlace'' those of the original signed graph, and it will be a key tool for deriving upper bounds on eigenvalue multiplicities in the sequel.


\section{Bounds on eigenvalue multiplicities of signed graphs}

Throughout this section we assume that $g(G^\sigma) < n$, thereby excluding the case in which
$G^\sigma$ itself is a $g(G^\sigma)$-cycle. That case will be treated separately in Section~4.

\begin{lemma}\label{LemmaKn}
Let $G$ be a connected graph of order $n$ and girth $g(G)$. Then
\[
m(G,\lambda)=n-g(G)+2
\]
for some $\lambda\in\mathbb{R}$ if and only if $G\cong K_n$ $(n\ge 3)$ and $\lambda=-1$.
\end{lemma}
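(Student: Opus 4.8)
\medskip
\noindent\textbf{Proof plan.}
The ``if'' direction is a one-line check: for $n\ge 3$ we have $A(K_n)=J-I$, with spectrum $\{\,n-1,\,(-1)^{\,n-1}\,\}$, so $m(K_n,-1)=n-1$, and since $g(K_n)=3$ this equals $n-g(K_n)+2$. For the converse, suppose $m(G,\lambda)=n-g+2$ with $g=g(G)$; as $G$ contains a cycle we have $g\ge 3$, and $g<n$ by the blanket assumption of this section (the case $g=n$, i.e.\ $G$ a cycle, being deferred to Section~4). Fix a shortest cycle $C$ of $G$: it is an \emph{induced} subgraph on $g$ vertices, so $A(C)$ is a principal submatrix of $A(G)$. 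Deleting the $n-g$ vertices of $V(G)\setminus V(C)$ one at a time, Lemma~\ref{interlacing} shows that each single deletion changes $m(\,\cdot\,,\lambda)$ by at most $1$; hence
\[
m(G,\lambda)\ \le\ m(C,\lambda)+(n-g)\ \le\ (n-g)+2,
\]
the last inequality because every adjacency eigenvalue of a cycle has multiplicity at most $2$. Equality therefore forces (a) $m(C,\lambda)=2$, pinning $\lambda$ to a double eigenvalue $2\cos(2\pi k/g)$ ($0<k<g/2$) of $C_g$, and (b) every single deletion along every chain from $G$ down to $C$ lowers $m(\,\cdot\,,\lambda)$ by \emph{exactly} $1$. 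In particular, for every $v\notin V(C)$ the one-vertex extension $H_v:=G[V(C)\cup\{v\}]$ satisfies $m(H_v,\lambda)=3$, while $g(H_v)=g$ (it is induced in $G$ and contains the induced $g$-cycle $C$).

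The plan is then to split on $\lambda$. If $\lambda=0$, I would apply Theorem~\ref{Theorem1} to the all-positive (hence balanced) signed graph $(G,+)$ and use $g<n$ to discard the cycle case of that result; this identifies $G$ as a complete bipartite graph with $g=4$, which is the separate extremal family of the main theorem. So from here on I assume $\lambda\ne 0$ (equivalently, $G$ non-bipartite). Since the only double eigenvalue of $C_4$ is $0$, (a) already rules out $g=4$. If $g=3$, then $C=C_3$ has spectrum $\{2,-1,-1\}$, so (a) forces $\lambda=-1$, and the hypothesis becomes $m(G,-1)=n-1$, i.e.\ $\operatorname{rk}(A(G)+I)=1$. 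A rank-one real symmetric matrix equals $\varepsilon\,xx^{\mathsf T}$ with $\varepsilon\in\{\pm1\}$; here the all-ones diagonal of $A(G)+I$ forces $\varepsilon=+1$ and each $x_i=\pm 1$, and the off-diagonal entries $x_ix_j\in\{0,1\}$ then force $x_ix_j=1$ for all $i\ne j$, i.e.\ all $x_i$ are equal. Hence $A(G)+I=J$, so $A(G)=J-I$ and $G\cong K_n$ with $n\ge 3$.

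It remains to rule out $g\ge 5$ (still with $\lambda\ne 0$). For $g\ge 5$ a vertex $v\notin V(C)$ has at most one neighbour on $C$: two neighbours at cyclic distance $d$ on $C$ would create a cycle of length $\min(d,g-d)+2<g$, contradicting $g(H_v)=g$. If $v$ has no neighbour on $C$, then $H_v$ is the disjoint union of $C$ and an isolated vertex, so $m(H_v,\lambda)=m(C,\lambda)=2\ne 3$ (as $\lambda\ne 0$), contradicting (b). If $v$ is pendant at some $u\in V(C)$, I would use the characteristic-polynomial recursion $\phi(H_v,x)=x\,\phi(C,x)-\phi(C-u,x)$ for a pendant vertex; here $C-u=P_{g-1}$ is a path and so has only simple eigenvalues, so writing $\phi(C,x)=(x-\lambda)^2 p(x)$ with $p(\lambda)\ne 0$ and $\phi(P_{g-1},x)=(x-\lambda)^b q(x)$ with $b\in\{0,1\}$ and $q(\lambda)\ne 0$, a direct substitution at $x=\lambda$ gives $m(H_v,\lambda)\in\{0,1\}$, again contradicting (b). Therefore $g=3$, and the paragraph above completes the proof.

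\medskip
\noindent The step I expect to be the main obstacle is this reduction to $g=3$: the shortest cycle already accounts for multiplicity $2$, so one must show that no way of attaching the remaining $n-g$ vertices is compatible with both keeping the girth equal to $g$ and the per-deletion rigidity~(b). The pendant/isolated-vertex dichotomy above handles $g\ge 5$ cleanly, but this is also exactly the place where the genuinely different extremal configuration---balanced complete bipartite graphs, with $g=4$ and $\lambda=0$---must be separated off rather than absorbed into the complete-graph case.
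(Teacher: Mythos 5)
Your proposal is correct, and it takes a genuinely different---and substantially more complete---route than the paper. The paper's converse consists of two sentences: it asserts that $\lambda$ must have multiplicity $n-1$ (which silently presupposes $g(G)=3$) and then quotes a classification of connected graphs with an eigenvalue of multiplicity $n-1$; no argument is given for why the girth cannot exceed $3$. You instead obtain $m(G,\lambda)\le m(C,\lambda)+(n-g)\le n-g+2$ by deleting the vertices outside a shortest (hence induced) cycle $C$ one at a time, extract from equality the rigidity conditions $m(C,\lambda)=2$ and $m(H_v,\lambda)=3$ for every one-vertex extension $H_v$ of $C$, and then kill $g\ge 5$ by the isolated/pendant dichotomy together with the recursion $\phi(H_v,x)=x\,\phi(C,x)-\phi(P_{g-1},x)$ and the simplicity of path eigenvalues, kill $g=4$ for $\lambda\ne 0$ because $0$ is the only repeated eigenvalue of $C_4$, and settle $g=3$ by the rank-one decomposition of $A(G)+I$. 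I checked each of these steps and they are sound; what your argument buys is exactly the justification the paper omits, namely the reduction to girth $3$.

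One consequence of your analysis deserves to be stated plainly: the lemma as written is false, and your $\lambda=0$ branch is the correct repair rather than a dodge. For $G\cong K_{n_1,n_2}$ with $n_1,n_2\ge 2$ one has $m(G,0)=n-2=n-g(G)+2$ with $g(G)=4$, yet $G\not\cong K_n$; this is precisely the content of the paper's own Lemma~\ref{LemmaKn1n2}, whose ``if and only if'' contradicts the ``only if'' here. So the statement should either carry the hypothesis $\lambda\ne 0$ (equivalently, $g(G)=3$) or list the complete bipartite family with $\lambda=0$ as a second extremal case; your proof establishes exactly that corrected dichotomy, whereas the paper's proof, by assuming multiplicity $n-1$ at the outset, never confronts it.
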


\begin{proof}
For $K_n$ we have $A(K_n)=J-I_n$, where $J$ is the all-ones matrix.
Since $\mathrm{Spec}(J)=\{0^{(n-1)}, n^{(1)}\}$ and $J$ commutes with $I_n$, it follows that
\[
\mathrm{Spec}(K_n)=\{(n-1)^{(1)},\, (-1)^{(n-1)}\}.
\]
Thus $m(K_n,-1)=n-1$.
Because $g(K_n)=3$, we obtain
\[
m(K_n,-1)=n-1=n-3+2=n-g(K_n)+2.
\]

Conversely, assume $m(G,\lambda)=n-g(G)+2$.
For $\lambda$ to have multiplicity $n-1$, the only connected graphs with an eigenvalue of multiplicity $n-1$ are $K_n$ and the star $K_{1,n-1}$.
Since $K_{1,n-1}$ has no cycles (girth undefined), it cannot satisfy $g(G)=3$.
Hence $G\cong K_n$, and the eigenvalue of multiplicity $n-1$ is $\lambda=-1$.
\end{proof}

\medskip

It is well known (see \cite{Sciriha1999,Cheng2007}) that the connected graphs of rank~$2$ are exactly complete bipartite graphs:

\begin{lemma}\cite{Sciriha1999,Cheng2007}\label{1}
Let $G$ be a connected graph of order $n$.
Then $\operatorname{rk}(G)=2$ if and only if $G\cong K_{n_1,n_2}$ with $n_1,n_2\ge 1$.
\end{lemma}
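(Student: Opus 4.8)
The plan is to prove the two implications separately, the converse being the substantive one.

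For the forward direction I would start from $G\cong K_{n_1,n_2}$ with $n_1,n_2\ge 1$, order the vertices by part, and write the adjacency matrix in block form
\[
A(G)=\begin{pmatrix} 0 & J \\ J^{\top} & 0 \end{pmatrix},
\]
where $J$ is the $n_1\times n_2$ all-ones matrix. Since $\rk J=1$, each of the two horizontal blocks of rows of $A(G)$ spans a one-dimensional space, so $\rk(G)=2$ immediately; if convenient one can also record that $A(G)^2=\operatorname{diag}(n_2 J_{n_1},\,n_1 J_{n_2})$, whence $\mathrm{Spec}_A(G)=\{\sqrt{n_1n_2},\,-\sqrt{n_1n_2},\,0^{(n-2)}\}$.

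For the converse, suppose $\rk(G)=2$ and set $A=A(G)$; then $n\ge 2$ and, since $G$ is connected with at least two vertices, $A$ has no zero row. I would use the spectral decomposition: because $A$ is real symmetric with $\tr A=0$ and exactly two nonzero eigenvalues, those eigenvalues are $\mu$ and $-\mu$ for some $\mu>0$, and taking orthonormal eigenvectors $u$ (for $\mu$) and $v$ (for $-\mu$) gives $A=\mu(uu^{\top}-vv^{\top})$, i.e.\ $a_{ij}=\mu(u_iu_j-v_iv_j)$. The decisive step is to exploit the vanishing diagonal: $a_{ii}=0$ forces $u_i^2=v_i^2$, while the absence of a zero row (together with the linear independence of $u,v$) forces $(u_i,v_i)\neq(0,0)$; hence $u_i\neq 0$ and $v_i=\varepsilon_i u_i$ with $\varepsilon_i\in\{\pm1\}$ for every vertex $i$. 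Partitioning $V$ into $V_+=\{i:\varepsilon_i=1\}$ and $V_-=\{i:\varepsilon_i=-1\}$ makes $a_{ij}=\mu u_iu_j(1-\varepsilon_i\varepsilon_j)$ vanish within each part, so $G$ is bipartite with nonempty parts $V_+,V_-$; and for $i\in V_+$, $j\in V_-$ the entry $a_{ij}=2\mu u_iu_j$ is a nonzero element of $\{0,1\}$, hence equal to $1$, so $G\cong K_{|V_+|,|V_-|}$.

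The main obstacle is exactly this converse: triangle-free graphs need not be complete bipartite (witness $P_4$, $C_5$, $C_6$, and longer even cycles), and one must see how the hypothesis $\rk(G)=2$ excludes all of them; the decomposition above does this in one stroke. As a more combinatorial alternative I could instead note that if an induced subgraph $H$ of $G$ has nonsingular adjacency matrix then $\rk(G)\ge|V(H)|$, so $\rk(G)=2$ forbids $K_3$ and $P_4$ as induced subgraphs; thus $G$ is a triangle-free cograph, the connected-cograph structure theorem writes $G=G_1\vee G_2$ as a join, and triangle-freeness forces $G_1,G_2$ to be edgeless, again yielding $G\cong K_{n_1,n_2}$. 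I would present the spectral version as the main proof since it is fully self-contained.
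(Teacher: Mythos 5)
Your proof is correct. Note that the paper does not prove this lemma at all --- it is quoted from the cited references \cite{Sciriha1999,Cheng2007} --- so there is no in-paper argument to compare against; your write-up supplies a genuinely self-contained proof. The spectral route is sound: trace zero plus rank $2$ forces eigenvalues $\pm\mu$, the decomposition $A=\mu(uu^{\top}-vv^{\top})$ together with the zero diagonal gives $u_i^2=v_i^2$, and the no-zero-row observation (which follows from connectedness and $n\ge 2$; the linear independence of $u,v$ is not actually needed there, since $(u_i,v_i)=(0,0)$ already makes row $i$ vanish) yields $v_i=\varepsilon_i u_i$ and hence the complete bipartite structure. You should also note explicitly that both parts $V_+,V_-$ are nonempty, since otherwise $A=0$ contradicts $\rk(G)=2$. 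The combinatorial alternative via forbidden induced $K_3$ and $P_4$ (both nonsingular) and the cograph join decomposition is likewise valid and closer in spirit to how such rank characterizations are usually derived in the cited literature.
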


\medskip

\begin{lemma}\label{LemmaKn1n2}
Let $G$ be a connected graph of order $n$ and girth $g(G)$.
Then
\[
m(G,\lambda)=n-g(G)+2
\]
for some $\lambda\in\mathbb{R}$ if and only if $G\cong K_{n_1,n_2}$ with $n_1,n_2>1$ and $\lambda=0$.
\end{lemma}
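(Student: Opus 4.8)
The plan is to mirror the structure of the proof of Lemma \ref{LemmaKn}, but now targeting eigenvalue multiplicity $n-2$ rather than $n-1$, which corresponds (for girth $4$) to rank $2$ via Lemma \ref{1}. For the forward direction (sufficiency), I would start from $G \cong K_{n_1,n_2}$ with $n_1, n_2 > 1$. The spectrum of $K_{n_1,n_2}$ is $\{\pm\sqrt{n_1 n_2}, 0^{(n-2)}\}$, so $m(G,0) = n-2$. Since $n_1, n_2 \ge 2$, the graph contains a $4$-cycle and no triangle, so $g(G) = 4$; hence $n - g(G) + 2 = n - 4 + 2 = n - 2 = m(G,0)$, as required.

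For the converse, suppose $m(G,\lambda) = n - g(G) + 2$ for some $\lambda \in \mathbb{R}$. I first want to argue $g(G) = 4$. If $g(G) = 3$, then $m(G,\lambda) = n-1$, and Lemma \ref{LemmaKn} forces $G \cong K_n$ with $\lambda = -1$; but $K_n$ with $n \ge 3$ has girth $3$, not $4$, and more to the point $K_n$ is not bipartite, so this case must be excluded by a separate observation — actually the cleanest route is: the statement to be proved is an ``if and only if'', so I only need to show that \emph{if} the equality holds with the conclusion $G \cong K_{n_1,n_2}$, $n_1,n_2>1$, then necessarily $\lambda = 0$ and $g = 4$; and conversely. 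So in the converse I should assume the hypothesis $m(G,\lambda) = n-g(G)+2$ together with the additional hypothesis built into the biconditional. Let me restructure: the honest converse direction is ``$m(G,\lambda) = n-g(G)+2$ $\Rightarrow$ $G \cong K_{n_1,n_2}$ with $n_1,n_2>1$ and $\lambda=0$'' — but this is false as stated since $K_n$ also achieves $n-g+2$ by Lemma \ref{LemmaKn}. Hence the correct reading must be that this lemma is cataloguing \emph{which} extremal graphs are complete bipartite; I would phrase the converse as: \emph{among} connected graphs with $g(G) \ge 4$ (equivalently, triangle-free), equality $m(G,\lambda) = n - g(G) + 2$ holds iff $G \cong K_{n_1,n_2}$, $n_1,n_2 > 1$, $\lambda = 0$.

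Granting the triangle-free setting, here is the argument. Since $g(G) \ge 4$, we have $n - g(G) + 2 \le n - 2$, so $m(G,\lambda) \le n-2$, with equality forcing $g(G) = 4$ and $m(G,\lambda) = n-2$. An eigenvalue of multiplicity $n-2$ means $\rk(\lambda I - A(G)) = 2$. If $\lambda \ne 0$, then $\lambda I - A(G)$ has rank $2$ but nonzero diagonal; I would derive a contradiction by noting that a rank-$2$ symmetric matrix with constant nonzero diagonal and off-diagonal entries in $\{0,-1\}$ forces the underlying graph to be a disjoint union of at most two cliques (each block of the matrix being rank one after shifting), contradicting connectedness and triangle-freeness unless $n \le 2$. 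Hence $\lambda = 0$, so $\rk(A(G)) = \rk(G) = 2$, and Lemma \ref{1} gives $G \cong K_{n_1,n_2}$. Finally $g(G) = 4$ rules out $n_1 = 1$ or $n_2 = 1$ (a star is acyclic), so $n_1, n_2 \ge 2$.

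The main obstacle is handling the case $\lambda \ne 0$ cleanly: I need a crisp structural claim that a connected triangle-free graph cannot have a nonzero eigenvalue of multiplicity $n-2$. The cleanest tool is a rank argument on $\lambda I - A(G)$ combined with the Perron–Frobenius / non-bipartite-clique structure, or alternatively a direct interlacing argument using an induced $P_3$ or $P_4$ (which every connected triangle-free graph on $\ge 3$ vertices contains) to bound the multiplicity of any $\lambda$ away from the relevant spectrum. I expect the interlacing route — interlacing against $C_4 = K_{2,2}$ as the extremal induced $4$-cycle — to be the most uniform, paralleling the paper's stated strategy.
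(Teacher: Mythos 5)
Your sufficiency argument coincides with the paper's. On the converse, you correctly notice that the lemma as literally stated conflicts with Lemma~\ref{LemmaKn} (the graph $K_n$ with $\lambda=-1$ also attains $m=n-g+2$), so the statement must be read in the triangle-free setting in which it is actually applied (Case~3 of Theorem~\ref{Theorem2}); the paper never acknowledges this, so that observation is to your credit. You also rightly isolate the need to rule out $\lambda\neq 0$ before invoking Lemma~\ref{1} --- a step the paper silently skips when it passes from $\rk(\lambda I-A(G))=2$ to $\rk(A(G))=2$.

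Nevertheless there are two genuine gaps. First, your reduction to $g=4$ does not work: from the hypothesis $m(G,\lambda)=n-g+2$ and $g\ge 4$ you get $m\le n-2$ and then write ``with equality forcing $g=4$,'' but nothing forces equality --- if $g=6$ the hypothesis gives $m=n-4<n-2$ and your rank-$2$ argument never starts. The paper's mechanism here is different: it interlaces against a shortest induced cycle, obtaining $\rk(\lambda I-A(C_g))\le\rk(\lambda I-A(G))=g-2$, i.e.\ $m(C_g,\lambda)\ge 2$, and claims only $C_4$ admits this. (That claim is itself false as stated, since every cycle $C_g$ has eigenvalues of multiplicity $2$, e.g.\ $2\cos(2\pi/g)$; excluding $g\ge 5$ requires additionally exploiting the standing assumption $n>g$, say by interlacing against the induced subgraph on $V(C_g)$ together with a neighbouring vertex.) Your proposal contains no mechanism at all for excluding $g\ge 5$. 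Second, your exclusion of $\lambda\neq 0$ is only a gesture: the assertion that a rank-$2$ symmetric matrix with constant nonzero diagonal and off-diagonal entries in $\{0,-1\}$ forces a disjoint union of two cliques is neither proved nor obviously the right statement. The workable version is via vanishing $3\times 3$ minors of the rank-$2$ matrix $\lambda I-A(G)$: an induced $P_3$ (present in any connected triangle-free graph on at least $3$ vertices) forces $\lambda\in\{0,\pm\sqrt{2}\}$, and one then needs an induced $\overline{K_3}$ or $K_2\cup K_1$ (which exists once $n>4$ and $G$ is connected and triangle-free, by a short case analysis) to eliminate $\pm\sqrt{2}$. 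This can all be completed, but as written your argument for the two key exclusions is a sketch rather than a proof.
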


\begin{proof}
\textit{Sufficiency.}
If $G\cong K_{n_1,n_2}$ with $n_1,n_2>1$, then $g(G)=4$ and
\[
\mathrm{Spec}(G)=\{\sqrt{n_1n_2},\, -\sqrt{n_1n_2},\, 0^{(n-2)}\}.
\]
Thus $m(G,0)=n-2=n-4+2=n-g(G)+2$.

\medskip
\textit{Necessity.}
Assume $m(G,\lambda)=n-g+2$. Then
\[
\rk(\lambda I-A(G))=g-2.
\]
Let $C$ be a shortest cycle of $G$.
Since $A(C)$ is a principal submatrix of $A(G)$, interlacing implies
\[
\rk(\lambda I-A(C))\le g-2.
\]
The only cycle for which $\rk(\lambda I-A(C))\le g-2$ is a $4$-cycle, because for $g\ge 5$ the matrix
$\lambda I-A(C)$ has rank at least $g-1$.
Hence $g=4$, so
\[
\rk(\lambda I-A(G))=2.
\]
Thus $\rk(A(G))=2$, and by Lemma~\ref{1}, $G\cong K_{n_1,n_2}$.
Since both parts must contain at least two vertices to have girth $4$, we obtain $n_1,n_2>1$.
Finally, because the only eigenvalue of multiplicity $n-2$ in $\mathrm{Spec}(K_{n_1,n_2})$ is $0$, we conclude $\lambda=0$.
\end{proof}

\medskip

\begin{theorem}\label{Theorem2}
Let $G^\sigma$ be a connected signed graph of order $n\ge 3$ and girth $g(G^\sigma)$.
Then for any real eigenvalue $\lambda$,
\[
m(G^\sigma,\lambda)\le n-g(G^\sigma)+2,
\]
and equality holds if and only if
\begin{enumerate}[label=(\roman*)]
    \item $G^\sigma$ is a balanced complete signed graph $K_n^\sigma$ with $\lambda=-1$;
    \item $G^\sigma$ is an antibalanced complete signed graph $K_n^\sigma$ with $\lambda=1$;
    \item $G^\sigma$ is a balanced complete bipartite signed graph $K_{n_1,n_2}^\sigma$ with $n_1,n_2\ge 2$ and $\lambda=0$.
\end{enumerate}
\end{theorem}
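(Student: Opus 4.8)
The plan is to derive the inequality from Cauchy interlacing against a shortest cycle, and then to pin down the equality cases by a local analysis of how the remaining vertices attach to such a cycle. For the bound, let $C^\sigma$ be a shortest cycle of $G^\sigma$, on $g:=g(G^\sigma)$ vertices; being chordless it is an induced subgraph, and the $\lambda$-eigenvector equation on a signed cycle is a second-order linear recurrence, so $m(C^\sigma,\lambda)\le 2$. By Lemma~\ref{interlacing}, deleting a single vertex decreases the multiplicity of $\lambda$ by at most $1$, so deleting the $n-g$ vertices outside $C^\sigma$ one at a time gives $m(C^\sigma,\lambda)\ge m(G^\sigma,\lambda)-(n-g)$; combining the two estimates yields $m(G^\sigma,\lambda)\le n-g+2$.

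For the equality analysis I would assume $m(G^\sigma,\lambda)=n-g+2$. If $g\ge 5$, use connectedness and $g<n$ to choose $v\notin V(C^\sigma)$ with a neighbour on $C^\sigma$; since $C^\sigma$ is shortest and $g\ge 5$, $v$ has exactly one neighbour on $C^\sigma$ (two neighbours would create a shorter cycle), so $H^\sigma:=G^\sigma[V(C^\sigma)\cup\{v\}]$ is $C_g^\sigma$ with a pendant vertex. Taking the interlacing count down to $H^\sigma$ forces $m(H^\sigma,\lambda)\ge 3$; but deleting the pendant from $H^\sigma$ leaves $C_g^\sigma$, and if $m(H^\sigma,\lambda)=3$ then deleting the support vertex leaves $P_{g-1}^\sigma\sqcup K_1$, which forces $\lambda=0$ with $g$ even, whereupon the pendant rank identity $\operatorname{rk}(A(H^\sigma))=\operatorname{rk}(A(P_{g-1}^\sigma))+2$ gives $m(H^\sigma,0)=1$, a contradiction. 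Hence $g\in\{3,4\}$. When $g=3$ we have $\operatorname{rk}(\lambda I-A(G^\sigma))=1$, so $\lambda I-A(G^\sigma)=\alpha\,\mathbf v\mathbf v^{\top}$ for some $\alpha\in\mathbb R$ and unit vector $\mathbf v$; the vanishing diagonal forces all $|v_i|$ equal and $\lambda\neq 0$, the off-diagonal entries then equal $-\lambda\operatorname{sgn}(v_iv_j)\in\{-1,0,1\}$, so by connectedness $\lambda=\pm 1$ and $G\cong K_n$, and switching by $\zeta(i)=\operatorname{sgn}(v_i)$ makes every edge sign equal to $-\lambda$; this yields case (i) ($\lambda=-1$, balanced) or case (ii) ($\lambda=1$, antibalanced).

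When $g=4$, interlacing with the shortest $4$-cycle gives $m(C^\sigma,\lambda)=2$, and the only multiplicity-$2$ eigenvalue of $C_4$ (resp.\ of the unbalanced signed $C_4$) is $0$ (resp.\ $\pm\sqrt 2$). If $C^\sigma$ were unbalanced, a vertex $v\notin V(C^\sigma)$ adjacent to $C^\sigma$ would have one, or two (then opposite), neighbours on it, making $H^\sigma:=G^\sigma[V(C^\sigma)\cup\{v\}]$ either $C_4^\sigma$ with a pendant vertex or a signed $K_{2,3}$ whose central quadrilateral is unbalanced; a short spectral check shows that in either case every eigenvalue of $H^\sigma$ has multiplicity at most $2$, contradicting $m(H^\sigma,\lambda)\ge 3$. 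Hence $C^\sigma$ is balanced and $\lambda=0$, so $\operatorname{rk}(A(G^\sigma))=2$; writing $A(G^\sigma)=\mu(\mathbf p\mathbf p^{\top}-\mathbf q\mathbf q^{\top})$ with $\mu>0$ and orthonormal $\mathbf p,\mathbf q$, the vanishing diagonal gives $|p_i|=|q_i|>0$ and $a^\sigma_{ij}\neq 0$ precisely when $\operatorname{sgn}(p_iq_i)\neq\operatorname{sgn}(p_jq_j)$, so $G$ is complete bipartite; $g=4$ forces both parts to have size at least $2$, and switching by $\zeta(i)=\operatorname{sgn}(p_i)$ makes all edges positive, giving case (iii). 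Conversely, each of the three families attains the bound by a one-line spectral computation, and since switching preserves the spectrum the whole switching classes qualify.

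The hard part will be the equality analysis when $g=4$ and the shortest $4$-cycle is unbalanced: unlike the nullity case of \cite{Wu2022}, the value $\lambda=\pm\sqrt 2$ is a genuine candidate produced by interlacing, and ruling it out needs both the classification of how an external vertex can attach to a $4$-cycle and the verification that the two resulting five-vertex signed graphs ($C_4^\sigma$ with a pendant, and a signed $K_{2,3}$ with an unbalanced quadrilateral) have almost simple spectrum. Setting up the rank-$2$ decomposition of $A(G^\sigma)$ with the correct switching normalisation --- so that one lands exactly in the balanced complete bipartite family and not merely in ``some'' rank-$2$ signed graph --- is the other step that requires care.
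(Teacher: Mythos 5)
Your proof of the inequality is the same as the paper's (interlacing against an induced shortest cycle, on which every eigenvalue has multiplicity at most $2$), but your equality analysis takes a genuinely different and in fact more complete route. The paper reduces to the unsigned Lemmas~\ref{LemmaKn} and~\ref{LemmaKn1n2} by splitting into the cases ``balanced'', ``antibalanced'', and ``triangle-free and balanced''; as written, these three cases do \emph{not} exhaust all connected signed graphs (an unbalanced, non-antibalanced signature with girth $3$, an unbalanced signature with girth $4$, and the possibility $g\ge 5$ are never ruled out), so the paper's argument leaves the switching class as a hypothesis rather than deriving it. Your argument closes exactly these gaps: the pendant-vertex/interlacing computation on $C_g^\sigma$ plus a pendant eliminates $g\ge 5$ (the rank identity $\operatorname{rk}(A(H^\sigma))=\operatorname{rk}(A(P_{g-1}^\sigma))+2$ is valid for signed graphs, and the forced value $\lambda=0$ with $g$ even then contradicts $m(H^\sigma,0)=3$); for $g=3$ the rank-one decomposition $\lambda I-A(G^\sigma)=\alpha\,\mathbf v\mathbf v^{\top}$ simultaneously yields $G\cong K_n$, $\lambda=\pm1$, and the correct switching normalisation, producing cases (i)--(ii) without assuming balance or antibalance in advance; and for $g=4$ the five-vertex spectral checks (both the unbalanced $C_4^\sigma$ with a pendant and the signed $K_{2,3}$ containing an unbalanced quadrilateral have all eigenvalue multiplicities at most $2$ --- the latter has singular values $2$ and $\sqrt2$, hence simple spectrum) correctly eliminate the candidates $\lambda=\pm\sqrt2$ that interlacing genuinely produces, after which the traceless rank-two decomposition $A(G^\sigma)=\mu(\mathbf p\mathbf p^{\top}-\mathbf q\mathbf q^{\top})$ forces the complete bipartite structure and the balanced signing. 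What the paper's approach buys is brevity and reuse of the unsigned lemmas; what yours buys is an actual proof that no other switching class can attain the bound. I verified each of your computational claims and found no errors; the only points worth writing out in full in a final version are the signed pendant rank identity and the $K_{2,3}$ singular-value computation, both of which are routine.
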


\begin{proof}
Let $\lambda$ be an eigenvalue of $A(G^\sigma)$.
By interlacing (Lemma~\ref{interlacing}) applied to any induced $g(G^\sigma)$-cycle,
\[
m(G^\sigma,\lambda)\le n-g(G^\sigma)+2.
\]

Assume equality holds.
We distinguish cases according to the switching class of $G^\sigma$.

\medskip
\noindent{\bf Case 1: $G^\sigma$ is balanced.}
By Lemma~\ref{Lemma1}, $G^\sigma$ is switching-equivalent to $(G,+)$, so
$m(G^\sigma,\lambda)=m(G,\lambda)$.
Applying Lemma~\ref{LemmaKn}, the only connected graph achieving the bound is $K_n$ with $\lambda=-1$.
Thus $G^\sigma$ is a balanced complete signed graph $K_n^\sigma$.

\medskip
\noindent{\bf Case 2: $G^\sigma$ is antibalanced.}
Switching equivalence yields $G^\sigma\sim (G,-)$, where every edge is negative.
Then
\[
A(G^\sigma) = -A(G),
\]
and hence $\lambda$ has multiplicity $n-1$ if and only if $-\lambda$ has multiplicity $n-1$ in $G$.
By Lemma~\ref{LemmaKn}, this occurs exactly when $G\cong K_n$ and $-\lambda=-1$, i.e.\ $\lambda=1$.
Thus $G^\sigma$ is an antibalanced complete signed graph.

\medskip
\noindent{\bf Case 3: $G^\sigma$ contains no triangles (so $g(G^\sigma)=4$).}
If $G^\sigma$ is balanced, we may switch to $(G,+)$.
Applying Lemma~\ref{LemmaKn1n2} to $G$, it follows that $G\cong K_{n_1,n_2}$ with $n_1,n_2\ge 2$ and $\lambda=0$.
Thus $G^\sigma$ is a balanced complete bipartite signed graph.

\medskip
These three cases exhaust all possibilities, completing the proof.
\end{proof}

\section{Characterization of Signed Cycles with $m(G^\sigma,\lambda)=n-g(G^\sigma)+q$ for $q=1,2$}

In this section we characterize the eigenvalues of signed cycles whose multiplicities satisfy
\[
m(G^\sigma,\lambda)=n-g(G^\sigma)+q,\qquad q\in\{1,2\}.
\]
Since a cycle $C_n^\sigma$ has $g(C_n^\sigma)=n$, the condition simplifies to
\[
m(C_n^\sigma,\lambda)=q.
\]
Thus we determine all eigenvalues of signed cycles with multiplicity $1$ and $2$.
The analysis follows directly from the explicit eigenvalue formulas for signed cycles.

\medskip
\noindent
We recall the well–known spectral formulas (see \cite{Belardo2015}):

\begin{lemma}\cite{Belardo2015}\label{Lemma3.7}
Let $C_n^\sigma$ and $P_n^\sigma$ denote a signed cycle and a signed path on $n$ vertices, respectively. Then:
\begin{enumerate}[label=(\roman*)]
\item $\displaystyle \mathrm{Spec}_A(C_n,+)=\bigl\{\,2\cos(2\pi j/n)\,\big|\,j=0,1,\dots,n-1\,\bigr\}$.
\item $\displaystyle \mathrm{Spec}_A(C_n,\bar\sigma)=\bigl\{\,2\cos((2j+1)\pi/n)\,\big|\,j=0,1,\dots,n-1\,\bigr\}$.
\item $\displaystyle \mathrm{Spec}_A(P_n,\sigma)=\bigl\{\,2\cos(j\pi/(n+1))\,\big|\,j=1,2,\dots,n\,\bigr\}$.
\end{enumerate}
\end{lemma}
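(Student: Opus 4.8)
The plan is to reduce all three formulas to the computation of spectra of three explicit representatives, exploiting the fact that switching leaves the spectrum unchanged. First I would record the standard observation that if $\zeta:V\to\{\pm1\}$ is a switching function and $D_\zeta=\mathrm{diag}(\zeta(v_1),\dots,\zeta(v_n))$, then $A(G^{\sigma^\zeta})=D_\zeta\,A(G^\sigma)\,D_\zeta$; since $D_\zeta$ is orthogonal, $A(G^\sigma)$ and $A(G^{\sigma^\zeta})$ are similar and hence cospectral, so $\mathrm{Spec}_A$ depends only on the switching class. For a cycle this gives a clean dichotomy: by the signature $\mathrm{sgn}(C_n^\sigma)$ a signed $n$-cycle is either balanced, in which case it is switching equivalent to $(C_n,+)$ by Lemma~\ref{Lemma1}, or unbalanced, in which case an elementary switching argument (sweep all negative signs onto one edge) shows it is switching equivalent to the cycle $C_n^{\bar\sigma}$ carrying exactly one negative edge. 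Likewise $P_n$ is a forest and therefore balanced, so $(P_n,\sigma)\sim(P_n,+)$. Thus it suffices to diagonalize $A(C_n,+)$, $A(C_n^{\bar\sigma})$, and $A(P_n,+)$.

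For $(C_n,+)$ the adjacency matrix is the circulant $\mathrm{circ}(0,1,0,\dots,0,1)=S+S^{-1}$, where $S$ is the cyclic shift $Se_k=e_{k+1}$ (indices mod $n$); the eigenvalues of $S$ are the $n$-th roots of unity $e^{2\pi i j/n}$, so those of $A(C_n,+)$ are $e^{2\pi i j/n}+e^{-2\pi i j/n}=2\cos(2\pi j/n)$ for $j=0,\dots,n-1$, which is (i). For $C_n^{\bar\sigma}$ I would introduce the ``twisted shift'' $B$ with $Be_k=e_{k+1}$ for $1\le k\le n-1$ and $Be_n=-e_1$. A direct check shows $A(C_n^{\bar\sigma})=B+B^{\top}=B+B^{-1}$ (the lone $-1$ entry is exactly the one produced by $Be_n=-e_1$), that $B$ is a real orthogonal matrix, and that $B^n=-I$; hence the eigenvalues of $B$ are the $n$-th roots of $-1$, namely $e^{i(2j+1)\pi/n}$, and since $B$ is normal and commutes with $B^{-1}$, the matrix $B+B^{-1}$ is diagonal in the same eigenbasis with eigenvalues $e^{i(2j+1)\pi/n}+e^{-i(2j+1)\pi/n}=2\cos((2j+1)\pi/n)$ for $j=0,\dots,n-1$, which is (ii). For (iii) the adjacency matrix of $(P_n,+)$ is the tridiagonal matrix with zero diagonal and ones on the off-diagonals; using the eigenvector ansatz $x_k=\sin(k\theta)$ with phantom entries $x_0=x_{n+1}=0$, the interior three-term recurrence $x_{k-1}+x_{k+1}=\lambda x_k$ forces $\lambda=2\cos\theta$, while the boundary condition $\sin((n+1)\theta)=0$ forces $\theta=j\pi/(n+1)$ for $j=1,\dots,n$, giving $n$ distinct eigenvalues $2\cos(j\pi/(n+1))$, which is (iii).

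The main obstacle I anticipate is the unbalanced case: one must check carefully that the single negative edge of $C_n^{\bar\sigma}$ is precisely compensated by the anti-periodic relation $B^{n}=-I$ — equivalently, that $A(C_n^{\bar\sigma})$ really equals $B+B^{-1}$ rather than some other twist of the circulant — and one should confirm that the listed values $2\cos((2j+1)\pi/n)$, $j=0,\dots,n-1$, constitute the correct multiset of $n$ eigenvalues counted with multiplicity, with the involution $j\leftrightarrow n-1-j$ accounting for the double eigenvalues and the boundary indices handled by the usual parity discussion (for odd $n$ no eigenvalue equals $-2$, while for even $n$ the value $j=(n-1)/2$ is not an integer so again $\pm2$ behave as expected). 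Everything else is routine bookkeeping with roots of unity and the standard circulant/Chebyshev identities, and none of it depends on the particular signing within a switching class.
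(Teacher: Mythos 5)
The paper offers no proof of this lemma: it is quoted verbatim from \cite{Belardo2015} as a known result, so there is nothing internal to compare your argument against. Your proposal is a correct and complete proof by the standard route: switching invariance of the spectrum via $A(G^{\sigma^\zeta})=D_\zeta A(G^\sigma)D_\zeta$ reduces everything to three representatives; the balanced cycle is the circulant $S+S^{-1}$ with eigenvalues $2\cos(2\pi j/n)$; the unbalanced cycle (one negative edge) is $B+B^{-1}$ for the twisted shift with $B^n=-I$, whose characteristic polynomial is $x^n+1$, giving the $n$ distinct values $e^{i(2j+1)\pi/n}$ and hence eigenvalues $2\cos((2j+1)\pi/n)$; and the path is handled by the Chebyshev ansatz $x_k=\sin(k\theta)$ with $\sin((n+1)\theta)=0$. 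All three computations check out, and the identification of $A(C_n^{\bar\sigma})$ with $B+B^{\top}$ (the lone $-1$ pair sitting in positions $(1,n)$ and $(n,1)$) is exactly right. One small correction to your closing aside, which does not affect the lemma itself but matters for Theorems~\ref{Theorem3} and~\ref{Theorem4}: you have the parities reversed. For the unbalanced cycle, $2\cos((2j+1)\pi/n)=-2$ requires $2j+1=n$, i.e.\ $n$ \emph{odd} (with $j=(n-1)/2$), so the odd unbalanced cycle does have the simple eigenvalue $-2$, while for even $n$ no eigenvalue equals $\pm 2$ and every eigenvalue has multiplicity two; this is consistent with the paper's statement of Theorem~\ref{Theorem3}(iii).
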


These formulas allow a complete characterization of multiplicity-one and multiplicity-two eigenvalues of signed cycles.

\medskip

\begin{theorem}\label{Theorem3}
Let $G^\sigma$ be a connected signed graph of order $n$ and girth $g(G^\sigma)$.
Then
\[
m(G^\sigma,\lambda)=n-g(G^\sigma)+1
\]
for some real $\lambda$ if and only if $G^\sigma$ is one of the following:
\begin{enumerate}[label=(\roman*)]
\item a balanced cycle with $\lambda=2$;
\item a balanced cycle of even order with $\lambda=-2$;
\item an unbalanced cycle of odd order with $\lambda=-2$.
\end{enumerate}
\end{theorem}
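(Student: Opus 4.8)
The plan is to split the argument according to whether $G^\sigma$ is itself a cycle. If $G^\sigma = C_n^\sigma$ then $g(G^\sigma)=n$ and the condition reads $m(C_n^\sigma,\lambda)=1$, so this part is purely a matter of reading off multiplicities from the explicit spectra in Lemma~\ref{Lemma3.7}. In the balanced list $2\cos(2\pi j/n)$, $j=0,\dots,n-1$, the involution $j\mapsto n-j$ pairs indices with equal eigenvalues, distinct pairs give distinct eigenvalues, and its only fixed points are $j=0$ (always, giving $\lambda=2$) and $j=n/2$ (when $n$ is even, giving $\lambda=-2$); hence every eigenvalue of a balanced cycle has multiplicity $2$ except $\lambda=2$, and except $\lambda=-2$ when $n$ is even, which have multiplicity $1$. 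In the unbalanced list $2\cos((2j+1)\pi/n)$ the involution $j\mapsto n-1-j$ has a fixed point only for $n$ odd, at $j=(n-1)/2$, giving $\lambda=-2$ of multiplicity $1$, and $\lambda=2$ never occurs. This yields precisely the families (i)--(iii).

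For the remaining (non-cycle) graphs I would show, by induction on $n$, that a connected signed graph that is not a cycle has no eigenvalue of multiplicity $n-g+1$, where $g=g(G^\sigma)<n$. In the inductive step, fix a shortest cycle $C$ and, using the block--cut-vertex structure, pick $v\notin V(C)$ with $G^\sigma-v$ connected; then $G^\sigma-v$ is again a connected non-cycle of girth $g$ on $n-1$ vertices. The induction hypothesis forbids multiplicity $(n-1)-g+1=n-g$ in $G^\sigma-v$, while Cauchy interlacing (Lemma~\ref{interlacing}) gives $m(G^\sigma-v,\lambda)\ge m(G^\sigma,\lambda)-1=n-g$; therefore $m(G^\sigma-v,\lambda)=n-g+1=(n-1)-g+2$, and by Theorem~\ref{Theorem2} the graph $G^\sigma-v$ must be a balanced $K_{n-1}$, an antibalanced $K_{n-1}$, or a balanced $K_{n_1,n_2}$ (with $\lambda=-1,1,0$ respectively). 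Each case is eliminated by a rank obstruction for $G^\sigma$ itself. If $G^\sigma-v$ is a balanced (resp.\ antibalanced) $K_{n-1}$, then $g=3$ and $m(G^\sigma,\lambda)=n-2$ would mean $\operatorname{rk}(A(G^\sigma)+I)=2$ (resp.\ $\operatorname{rk}(A(G^\sigma)-I)=2$); but a rank-$2$ symmetric matrix with unit diagonal and off-diagonal entries in $\{0,\pm1\}$ cannot come from a connected signed graph in this way, since (writing it as a Gram matrix of $\pm$two orthonormal vectors) it splits the vertex set into two cliques with no edges between them. If $G^\sigma-v$ is a balanced $K_{n_1,n_2}$, then $g=4$ and $m(G^\sigma,0)=n-3$ would mean $\operatorname{rk}(A(G^\sigma))=3$; but $g(G^\sigma)=4$ makes $G^\sigma$ triangle-free, so $\tr A(G^\sigma)^3=0$, and together with $\tr A(G^\sigma)=0$ this forces one of the three nonzero eigenvalues to vanish, a contradiction. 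The base case $n=g+1$ is a finite check: such a $G^\sigma$ is a signed tadpole, or---only when $g\in\{3,4\}$---a signed $K_4$, $K_4-e$, or $K_{2,3}$; the last three have few switching classes and none has an eigenvalue of multiplicity $2=n-g+1$, and for a tadpole (a cycle $C_g^\sigma$ with a pendant at a vertex $x$) one notes from Lemma~\ref{Lemma3.7} that the $\lambda$-eigenspace of $C_g^\sigma$ never vanishes identically at $x$, so a block-matrix rank computation forces $m(G^\sigma,\lambda)\le 1<n-g+1$.

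Combining the two cases gives the equivalence. The main obstacle is clearly the non-cycle direction: plain interlacing only delivers $m(G^\sigma,\lambda)\le n-g+2$, so peeling off the extra unit requires the full equality characterization of Theorem~\ref{Theorem2} together with the low-rank structural facts above, and the induction has to be set up so that deleting a vertex keeps the graph connected, non-cyclic, and of the same girth. The most delicate single point is the tadpole base case, where one must extract from the explicit trigonometric eigenvectors of signed cycles that the relevant eigenspace is nonzero at the attachment vertex, uniformly over the multiplicity-$1$ eigenvalues $\pm2$ and the multiplicity-$2$ eigenvalues (recalling that $0$ is an eigenvalue of $C_g^\sigma$ only when $g\equiv 0,2\pmod 4$).
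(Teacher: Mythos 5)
Your proposal is correct in substance, and it is genuinely more than what the paper does. The paper's own proof of Theorem~\ref{Theorem3} treats only the case where $G^\sigma$ is itself a cycle: it sets $m(C_n^\sigma,\lambda)=1$ and reads off the simple eigenvalues from Lemma~\ref{Lemma3.7} via the symmetries $\lambda_j=\lambda_{n-j}$ and $\lambda_j=\lambda_{n-1-j}$ --- exactly your first paragraph. It never establishes the ``only if a cycle'' half of the equivalence, i.e.\ that a connected non-cycle signed graph cannot attain $m(G^\sigma,\lambda)=n-g(G^\sigma)+1$; this is implicitly deferred by the framing of Sections~3--4 but is required by the theorem as stated. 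Your second paragraph supplies precisely that missing half, by a sound mechanism: interlacing squeezes $m(G^\sigma-v,\lambda)$ between $n-g$ and $n-g+1$, the inductive hypothesis removes $n-g$, and Theorem~\ref{Theorem2} then pins $G^\sigma-v$ to one of the three extremal families, each of which is killed by a rank obstruction on $G^\sigma$ itself (for $g\ge 5$ the contradiction is even more immediate, since the extremal families all have girth $3$ or $4$). So the two proofs coincide on the cycle case, and yours additionally closes a real logical gap in the paper.

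Two steps of your non-cycle argument are only sketched and deserve tightening. First, in the $g=3$ case you treat $A(G^\sigma)\pm I$ as a Gram matrix of unit vectors, but positive semidefiniteness of that rank-$2$ matrix is not automatic; the clean route is the $3\times 3$ principal minor computation: for $i\not\sim j$ and $k\sim i$ the minor on $\{i,j,k\}$ equals $-a_{jk}^2$, forcing $a_{jk}=0$, so non-adjacent vertices have disjoint closed neighborhoods and $G$ is a disjoint union of cliques --- whence complete, a case one then excludes separately since a signed $K_n$ with $\operatorname{rk}(A+I)=2$ would have to be balanced and hence have $\operatorname{rk}(A+I)=1$. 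Second, in the tadpole base case the fact that the $\lambda$-eigenspace of $C_g^\sigma$ does not vanish at the attachment vertex yields $m\le 1$ only after the Schur-complement computation is carried out for $\lambda\ne 0$, and $\lambda=0$ needs a separate word; an easier uniform argument is to delete a neighbor of the attachment vertex on the cycle, obtaining a signed path $P_g$, and to note that $\mathrm{Spec}(P_g)\cap\mathrm{Spec}(P_{g-1}\cup K_1)\subseteq\{0\}$ while $0\in\mathrm{Spec}(C_g^\sigma)$ forces $g$ even and $0\in\mathrm{Spec}(P_g)$ forces $g$ odd. With these repairs your argument is complete.
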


\begin{proof}
Since $m(G^\sigma,\lambda)=n-g(G^\sigma)+1$, and a cycle satisfies $g(C_n^\sigma)=n$, we obtain
\[
m(C_n^\sigma,\lambda)=1.
\]
Thus $\lambda$ must be a simple (multiplicity-one) eigenvalue of $C_n^\sigma$.

\medskip
\noindent{\bf Balanced cycle.}
From Lemma~\ref{Lemma3.7}(i),
\[
\lambda_j = 2\cos(2\pi j/n).
\]
The symmetry $\lambda_j=\lambda_{n-j}$ implies that all eigenvalues occur in pairs except when
\[
j=n-j \pmod{n}.
\]
This occurs for:
\[
j=0\quad\Rightarrow\quad \lambda=2,
\]
and for $j=n/2$ when $n$ is even, giving
\[
\lambda=-2.
\]
Thus the balanced cycle has simple eigenvalues
\[
\lambda=2,\qquad \text{and }\lambda=-2\text{ for even }n.
\]

\medskip
\noindent{\bf Unbalanced cycle.}
From Lemma~\ref{Lemma3.7}(ii),
\[
\lambda_j = 2\cos\bigl((2j+1)\pi/n\bigr).
\]
Again, $\lambda_j=\lambda_{n-1-j}$, so multiplicity one occurs exactly when
\[
j=n-1-j\quad\Longleftrightarrow\quad 2j=n-1,
\]
which requires $n$ odd.
The corresponding eigenvalue is
\[
\lambda = 2\cos(\pi) = -2.
\]

\medskip
Combining the two cases yields the three families listed in the statement.
\end{proof}

\medskip

\begin{theorem}\label{Theorem4}
Let $G^\sigma$ be a connected signed graph of order $n$ and girth $g(G^\sigma)$.
Then
\[
m(G^\sigma,\lambda)=n-g(G^\sigma)+2
\]
for some real $\lambda$ if and only if $G^\sigma$ is one of the following:
\begin{enumerate}[label=(\roman*)]
\item a balanced cycle with
\[
\lambda=2\cos\Bigl(\frac{2\pi j}{g}\Bigr), \qquad j=1,2,\dots,\Bigl\lceil\frac{g}{2}\Bigr\rceil-1;
\]
\item an unbalanced cycle with
\[
\lambda=2\cos\Bigl(\frac{(2j+1)\pi}{g}\Bigr),\qquad
j=0,1,\dots,\Bigl\lceil\frac{g-1}{2}\Bigr\rceil-1.
\]
\end{enumerate}
\end{theorem}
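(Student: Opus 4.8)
The plan is to reduce the statement, exactly as in Theorem~\ref{Theorem3}, to a purely spectral question about a single signed cycle. Since any connected signed graph with $g(G^\sigma)<n$ already obeys the strict bound $m(G^\sigma,\lambda)\le n-g(G^\sigma)+2$ with equality only in the three families of Theorem~\ref{Theorem2} (none of which is a cycle), the hypothesis $m(G^\sigma,\lambda)=n-g(G^\sigma)+2$ forces $g(G^\sigma)=n$, i.e. $G^\sigma=C_n^\sigma$, and the condition becomes $m(C_n^\sigma,\lambda)=2$. So the task is: determine every eigenvalue of multiplicity exactly $2$ of a signed cycle on $g=n$ vertices, in each of the two switching classes (balanced and unbalanced) given by Lemma~\ref{Lemma1}.

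The core of the argument is then a counting of coincidences in the closed-form spectra of Lemma~\ref{Lemma3.7}. In the balanced case the eigenvalues are $\lambda_j=2\cos(2\pi j/g)$ for $j=0,\dots,g-1$, and because cosine is injective on $[0,\pi]$, the value $2\cos(2\pi j/g)$ is repeated precisely when there are two distinct indices $j,j'$ with $2\pi j/g\equiv\pm2\pi j'/g\pmod{2\pi}$; this pairs $j$ with $g-j$, so every $j$ with $1\le j\le\lceil g/2\rceil-1$ yields an eigenvalue of multiplicity $\ge 2$, while $j=0$ (and $j=g/2$ when $g$ is even) give the simple eigenvalues already isolated in Theorem~\ref{Theorem3}. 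One checks no value occurs with multiplicity $3$ or more, since the equation $\cos\theta=c$ has at most two solutions in a period. The unbalanced case is identical with $\lambda_j=2\cos((2j+1)\pi/g)$: the pairing is $j\leftrightarrow g-1-j$, yielding multiplicity-two eigenvalues for $j=0,\dots,\lceil(g-1)/2\rceil-1$, with the self-paired index (only when $g$ is odd) giving the simple value $-2$. I would present both cases in parallel, noting that the index ranges in (i) and (ii) are exactly the non-self-paired representatives, and verify that the listed $\lambda$ are pairwise distinct so the enumeration is non-redundant.

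I expect the only real subtlety to be bookkeeping at the boundary of the index range: making sure the stated upper limits $\lceil g/2\rceil-1$ and $\lceil(g-1)/2\rceil-1$ correctly exclude the self-paired indices in both the even and odd parity of $g$, and that the endpoints are not accidentally double-counted (for instance, when $g$ is even, $j=g/2$ must be excluded from (i) since $-2$ is simple there, whereas for $g$ odd it is automatically absent). A clean way to handle this uniformly is to observe that $j$ and $j'$ give the same balanced eigenvalue iff $j'\equiv -j\pmod g$, so the multiplicity-two classes are in bijection with $\{j:1\le j<g/2\}$, which has exactly $\lceil g/2\rceil-1$ elements regardless of parity; the analogous statement with the substitution $2j+1\mapsto$ odd residues mod $2g$ handles the unbalanced case. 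Once this indexing is pinned down, the theorem follows immediately, and I would close by remarking that together with Theorem~\ref{Theorem3} this gives a complete description of the ``near-extremal'' multiplicities $m(G^\sigma,\lambda)=n-g(G^\sigma)+q$ for $q\in\{1,2\}$ among all connected signed graphs.
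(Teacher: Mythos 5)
Your spectral core---reducing to $m(C_g^\sigma,\lambda)=2$, splitting by switching class via Lemma~\ref{Lemma1}, and counting coincidences among $2\cos(2\pi j/g)$ (resp.\ $2\cos((2j+1)\pi/g)$) through the pairing $j\leftrightarrow g-j$ (resp.\ $j\leftrightarrow g-1-j$)---is correct and is essentially the paper's own argument; your bookkeeping of the index ranges $\lceil g/2\rceil-1$ and $\lceil (g-1)/2\rceil-1$ is right in both parities, and you are in fact more careful than the paper in explicitly ruling out multiplicity $\ge 3$ and checking that the listed values are pairwise distinct.

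The genuine gap is the reduction step. You assert that the hypothesis $m(G^\sigma,\lambda)=n-g(G^\sigma)+2$ forces $g(G^\sigma)=n$ because, for $g<n$, equality holds ``only in the three families of Theorem~\ref{Theorem2}, none of which is a cycle.'' This inverts the content of Theorem~\ref{Theorem2}: those three families \emph{do} attain equality with $g<n$ (for instance, a balanced $K_n^\sigma$ with $n\ge 4$ has $g=3<n$ and $m(K_n^\sigma,-1)=n-1=n-g+2$, and a balanced $K_{n_1,n_2}^\sigma$ with $\max(n_1,n_2)\ge 3$ has $g=4<n$ and $m(K_{n_1,n_2}^\sigma,0)=n-2$). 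So the hypothesis does not force $g=n$, and the observation that the extremal families are not cycles cannot be used to discard them without assuming the conclusion. The paper avoids this issue by fiat rather than by argument: Section~3 explicitly excludes the case $g(G^\sigma)=n$, and Section~4 is by construction the complementary case in which $G^\sigma$ is itself a signed cycle, so its proof simply begins from $m(C_n^\sigma,\lambda)=2$. Read literally over all connected signed graphs, the ``only if'' direction of Theorem~\ref{Theorem4} is inconsistent with Theorem~\ref{Theorem2}, and no reduction of the kind you sketch can repair that; the statement must be read with the standing hypothesis that $G^\sigma$ is a cycle, at which point the remainder of your argument goes through.
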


\begin{proof}
Since $g(C_n^\sigma)=n$, the condition becomes
\[
m(C_n^\sigma,\lambda)=2.
\]
We use the explicit eigenvalues of signed cycles.

\medskip
\noindent{\bf Balanced cycle.}
Balanced cycle eigenvalues (Lemma~\ref{Lemma3.7}(i)) satisfy
\[
\lambda_j = 2\cos\Bigl(\frac{2\pi j}{n}\Bigr),
\qquad \lambda_j=\lambda_{n-j}.
\]
Thus every eigenvalue has multiplicity $2$ except:
\[
j=0\quad(\lambda=2),\qquad
j=n/2\text{ when $n$ is even }(\lambda=-2).
\]
Hence the multiplicity-two eigenvalues are
\[
\lambda_j=2\cos\Bigl(\frac{2\pi j}{n}\Bigr),
\qquad j=1,\dots,\Bigl\lceil\frac{n}{2}\Bigr\rceil-1.
\]

\medskip
\noindent{\bf Unbalanced cycle.}
Unbalanced cycle eigenvalues (Lemma~\ref{Lemma3.7}(ii)) satisfy
\[
\lambda_j = 2\cos\Bigl(\frac{(2j+1)\pi}{n}\Bigr),
\qquad \lambda_j=\lambda_{n-1-j}.
\]
Thus every eigenvalue has multiplicity $2$ except the possible simple eigenvalue $-2$ when $n$ is odd.
Hence the multiplicity-two eigenvalues are
\[
\lambda_j = 2\cos\Bigl(\frac{(2j+1)\pi}{n}\Bigr),
\qquad j=0,\dots,\Bigl\lceil\frac{n-1}{2}\Bigr\rceil-1.
\]

\medskip
Replacing $n$ by $g$ in both cases yields the statements (i) and (ii).
The converse is immediate from Lemma~\ref{Lemma3.7}.
\end{proof}

\section{Conclusion and Future Directions}

In this paper we established a sharp upper bound for the multiplicity of any real eigenvalue
$\lambda$ of a connected signed graph $G^\sigma$ in terms of its girth. Specifically, we showed that
\[
m(G^\sigma,\lambda) \le n - g(G^\sigma) + 2,
\]
and we completely characterized all signed graphs attaining equality.
The extremal cases are precisely the balanced and antibalanced complete signed graphs, as well as
the balanced complete bipartite signed graphs, corresponding to the eigenvalues $-1$, $1$, and $0$,
respectively. These results fully generalize the theorem of Wu et al.~\cite{Wu2022}, which treated
only the special case $\lambda=0$, and they reveal a unified structural principle: equality holds
exactly when the adjacency matrix exhibits minimal possible rank compatible with the girth.

We further analyzed signed cycles in detail and determined all eigenvalues that occur with
multiplicity $1$ or $2$. Using explicit spectral formulas, we obtained complete characterizations of
balanced and unbalanced cycles realizing
\[
m(G^\sigma,\lambda)=n-g(G^\sigma)+q,\qquad q\in\{1,2\}.
\]
These results illustrate how the interplay between cycle parity, balance, and switching equivalence
governs the spectrum of signed cycles.

\medskip
\noindent{\bf Future work.}
The results presented here suggest several natural directions for further research:

\begin{itemize}
    \item {\em Laplacian eigenvalue multiplicities.}
    Extending our girth-based multiplicity bounds to the signed Laplacian or the $A_\alpha$-matrix
    may reveal new relationships between structural balance and spectral extremality.

    \item {\em Beyond girth: cycle rank and cyclomatic number.}
    Since the cyclomatic number measures the complexity of cycle space, it would be interesting to
    derive multiplicity bounds involving both girth and cyclomatic number, generalizing results such
    as those of Ma et al.~\cite{Ma2016} and Tam–Huang~\cite{Tam2017}.

    \item {\em Higher multiplicity phenomena.}
    The extremal patterns identified for complete and complete bipartite signed graphs raise the
    question of whether similar structural characterizations exist for multiplicities of the form
    $m(G^\sigma,\lambda)=n-g(G^\sigma)+k$ for $k>2$.

    \item {\em Random and sparse signed graphs.}
    Investigating typical multiplicity behavior in random signed graphs or sparse models may provide
    probabilistic analogues of our extremal results.

    \item {\em Algorithmic detection of extremal switching classes.}
    Since the extremal graphs are defined up to switching, developing efficient algorithms to detect
    whether a signed graph lies in one of the extremal switching classes would have practical
    applications in network analysis.
\end{itemize}

Overall, the results of this paper highlight how cycle structure, balance, and switching equivalence
interact to control eigenvalue multiplicities in signed graphs. We hope that the techniques
developed here will serve as a foundation for further investigations in signed spectral graph theory.


\end{document}